\newcommand{\supp}{\mathrm{supp}}
\theoremstyle{plain}
\newtheorem{theorem}{Theorem}
\newtheorem{proposition}[theorem]{Proposition}
\newtheorem{corollary}[theorem]{Corollary}
\theoremstyle{definition}
\theoremstyle{remark}
\newtheorem{remark}[theorem]{Remark}
\newtheorem{example}[theorem]{Example}
\def\N{{\rm I}\hskip-.13em{\rm N}}
\def\R{{\rm I}\hskip-.13em{\rm R}}
\date{}
\begin{document}

\title[An elementary proof of the Josefson-Nissenzweig theorem]{An elementary proof of the Josefson-Nissenzweig theorem for Banach spaces $C(K\times L)$}
\author{Jerzy K{\c{a}}kol}
\address{Faculty of Mathematics and Informatics. A. Mickiewicz University,
61-614 Pozna\'{n}}
\email{kakol@amu.edu.pl}
\author{Wies{\l}aw \'Sliwa}
\address{Faculty of Exact and Technical Sciences,
University of Rzesz\'ow%
\newline
\indent%
35-310 Rzesz\'ow, Poland }
\email{sliwa@amu.edu.pl; wsliwa@ur.edu.pl}

\subjclass{46E10, 54C35}
\keywords{Josefson-Nissenzweig theorem,  Banach spaces, Signed measures, convergence of measures, $C_p(X)$ and $C_k(X)$ spaces}


\begin{abstract}
In \cite{Kakol-Sobota-Zdomskyy} probabilistic methods, in particular a variant of the Weak Law of Large Numbers related to the Bernoulli distribution, have been used to show that for every infinite compact spaces $K$ and $L$ there exists   a sequence  $(\mu_n)$ of normalized signed measures on $K\times L$ with finite supports which converges to $0$ with respect to the weak$^*$ topology of the dual Banach space $C(K\times L)^{*}$.

In this paper, we return to this construction, limiting ourselves only to elementary combinatorial calculus. The main effects of this construction are additional information about the measures $\mu_n$, this is particularly clearly seen (among the others) in the resulting inequalities
$$\frac{1}{2\sqrt{\pi}}\frac{1}{\sqrt{n}} <\sup_{A\times B\subset X\times Y} |\mu_n(A\times B)|<\frac{2}{\sqrt{\pi}}\frac{1}{\sqrt{n}},$$
 $n\in\mathbb{N}$, with $\mu_n(f) \to_n 0$ for every $f\in C(X \times Y),$ where $X$ and $Y$ are arbitrary Tychonoff spaces containing infinite compact subsets, respectively.
As an application we explicitly describe for Banach spaces $C(X\times Y)$  some complemented subspaces isomorphic to $c_0$.
This result  generalizes the classical theorem of Cembranos and Freniche, which states that for every infinite compact spaces $K$ and $L$, the Banach space $C(K\times L)$ contains a complemented copy of the Banach space $c_0$.
\end{abstract}
\maketitle


\maketitle


\section{Introduction and motivations}

\bigskip

Let $X$ be  a Tychonoff space. By $C_{p}(X)$ we denote the space of real-valued continuous functions on $X$ endowed with the pointwise topology. For a compact (Hausdorff) space $X$  by $C(X)$ we denote the Banach space of continuous real-valued functions on $X$ and $c_0$ denotes the Banach space of all real-valued sequences which converge to $0$, both equipped with the uniform norm topology.

It is well known that  for every infinite compact space $X$ the Banach space $C(X)$ contains a copy of $c_0$ but there exist several examples od compact $X$ for which  $C(X)$ does not contain a complemented copy of $c_0$.   However, if $X$ is a product of two infinite compact spaces,  $C(X)$ always contains a complemented copy of $c_0$, as was proved by Cembranos \cite{Ce} and Freniche \cite{Fre84}.

An essential  step  of the proof of this result  was  an application of the classical Josefson--Nissenzweig theorem for Banach spaces $C(X)$ stating that for every infinite compact space $X$  there is a sequence $(\mu_n)$ of normalized signed regular Borel measures on $X$ which converges to $0$ with respect to the weak* topology of the dual space $C(X)^*$.

In \cite[Theorem 1.2]{Kakol-Sobota-Zdomskyy} we used some  tools from probability theory, in particular a variant of the Weak Law of Large
 Numbers related to the Bernoulli distribution to show   that for every infinite compact spaces $K$ and $L$ there exists   a sequence  $(\mu_n)$ of normalized signed measures on $K\times L$ with finite supports which converges to $0$ with respect to the weak$^*$ topology of the dual Banach space $C(K\times L)^{*}$.

 The main goal (Theorem \ref{MAIN}) of our present work is, on the one hand, to obtain much sharper general properties of the presented sequence of measures $(\mu_n)$ (defined similarly to \cite{Kakol-Sobota-Zdomskyy}) and, on the other hand, to use only very  elementary combinatorial calculus in their proof.

  A particularly striking property of the sequence $(\mu_n)$ that we show is the equality \[ \sup_{A\times B\subset X\times Y} |\mu_n(A\times B)|=\frac{1}{n2^n}\left\lceil \frac{n}{2} \right\rceil\binom{n}{\lceil \frac{n}{2} \rceil} \; \mbox{for every}\; n\in \mathbb{N};\] it follows
   the inequality  \[\frac{1}{2\sqrt{\pi}}\frac{1}{\sqrt{n}} <\sup_{A\times B\subset X\times Y} |\mu_n(A\times B)|<\frac{2}{\sqrt{\pi}}\frac{1}{\sqrt{n}} \; \mbox{for every}\; n\in \mathbb{N},\]
  with $\mu_n(f) \to_n 0$ for every $f\in C(X \times Y),$ where  $X$ and $Y$ are Tychonoff spaces containing infinite compact subsets and $\lceil p\rceil$ denotes the least integer that is greater or equal to the real number $p$.

As an application of  Theorem \ref{MAIN} we explicitly describe  for Banach spaces $C(X\times Y)$  some complemented subspaces isomorphic to $c_0$, see Corollary \ref{LAST2}.

Note that  all standard proofs of the Josefson--Nissenzweig theorem (see for example \cite{Diestel}) were  presented as   non-constructive, it was  difficult   to deduce from the proofs of Cembranos and Freniche how the constructed complemented copy of $c_0$ in a given space $C(K\times L)$ looks like.

For a Tychonoff space $X$ and a point $x\in X$ let $\delta_x:C_p(X)\to\mathbb{R},\,\,\, \delta_x:f\mapsto f(x),$ be the Dirac measure concentrated at $x$. The linear hull $L_p(X)$ of the set $\{\delta_x:x\in X\}$ in $\mathbb{R}^{C_p(X)}$ can be identified with the dual space of $C_p(X)$.
Elements of the space $L_p(X)$ will be called {\em finitely supported sign-measures} (or simply {\em sign-measures}) on $X$.

Each non-zero $\mu\in L_p(X)$ can be uniquely written as a linear combination of Dirac measures $\mu=\sum_{x\in F}\alpha_x\delta_x$ for some finite set $F\subset X$ and some non-zero real numbers $\alpha_x$. The set $F$ is called the {\em support} of the sign-measure $\mu$ and is denoted by $\supp(\mu)$. The measure $\sum_{x\in F}|\alpha_x|\delta_x$ will be denoted by $|\mu|$ and the real number $\|\mu\|=\sum_{x\in F}|\alpha_x|$ coincides with the {\em norm} of $\mu$ (in the dual Banach space $C(\beta X)^*$).

Following \cite{BKS1} we say
that for  a   Tychonoff space $X$  the space $C_{p}(X)$ satisfies the \emph{Josefson-Nissenzweig property} (JNP in short) if there exists a sequence $(\mu_n)$ of finitely supported sign-measures  on $X$ such that $\|\mu_n\|=1$ for all $n\in \mathbb{N}$, and $\mu_n(f)\to_n 0$ for each $f\in C(X)$. $\\$ The corresponding sequence $(\mu_n)$ of signed measures  is also called a JN-sequence. The existence of JN-sequences  and their properties is currently being intensively researched; see, for example, recent papers \cite{MSZ} and \cite{MS}, \cite{KMSZ}, \cite{Kakol-Sobota-Zdomskyy}.

\bigskip

Our main result is the following:
\begin{theorem}\label{MAIN}
Let $X$ and $Y$ be Tychonoff spaces that contain infinite compact subspaces $K$ and $L$, respectively. Let $n\in \N.$ Let $K_n\times L_n$ be a finite subset of $K\times L$ such that $|K_n|=2^n$ and $|L_n|=n$. Let $\varphi_n: K_n \to \{-1, 1\}^{L_n}$ be a bijection. Then \[\mu_n:= \frac{1}{n2^n}\sum_{(s,j)\in K_n\times L_n} \varphi_n(s)(j)\delta_{(s,j)}\] is a finitely supported signed measure on $X\times Y$ such that

\[(1)\;\;\; \|\mu_n\|=1 \; \mbox{and}\; \mbox{supp}\, (\mu_n)=K_n \times L_n;\]
\[(2) \sup_{A\times B\subset X\times Y} |\mu_n(A\times B)|=\frac{1}{n2^n}\sum_{i=\lceil \frac{n}{2}\rceil}^n (2i-n)\binom{n}{i} =\frac{1}{n2^n}\left\lceil \frac{n}{2} \right\rceil\binom{n}{\lceil \frac{n}{2} \rceil};\]
\[(3)\;\;\; \mu_n(K_{(n)}\times L_n)= \frac{1}{n2^n}\left\lceil \frac{n}{2} \right\rceil\binom{n}{\lceil \frac{n}{2} \rceil} \; \mbox{for some}\; K_{(n)}\subset K_n;\]
\[(4)\;\;\; \frac{1}{2\sqrt{\pi}}\frac{1}{\sqrt{n}} <\sup_{A\times B\subset X\times Y} |\mu_n(A\times B)|<\frac{2}{\sqrt{\pi}}\frac{1}{\sqrt{n}}.\]
\[(5)\;\;\; |\mu_n(f\otimes g)|\leq \frac{8}{\sqrt{\pi}}\frac{1}{\sqrt{n}}\|f\otimes g\|_{\infty}\;\mbox{and}\; |\mu_n(f\oplus g)|\leq \frac{8}{\sqrt{\pi}}\frac{1}{\sqrt{n}}\|f\oplus g\|_{\infty}\]
for all $f\in C(K)$ and $g\in C(L)$, where $f\otimes g, f\oplus g: K\times L \to \R$ with $(f\otimes g) (x,y) =f(x)g(y)$ and $ (f\oplus g) (x,y) = f(x)+g(y).$
It follows that $\mu_n(f) \to_n 0$ for every $f\in C(X \times Y).$ Thus $(\mu_n)$ is a JN-sequence.
\end{theorem}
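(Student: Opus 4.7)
The plan is to prove (1)--(5) in order, since later items follow from the combinatorial identity in (2), and then to deduce the JN conclusion by a density argument. Item (1) is immediate: $\|\mu_n\| = n\cdot 2^n/(n\cdot 2^n) = 1$, and since each coefficient $\varphi_n(s)(j) = \pm 1$ is nonzero, $\supp(\mu_n) = K_n \times L_n$.

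For (2) and (3), the combinatorial heart of the argument: since $\mu_n$ is supported on $K_n\times L_n$, we may restrict to $A \subset K_n$, $B \subset L_n$ with $|B| = b$. Set $\sigma_B(s) := \sum_{j \in B}\varphi_n(s)(j)$; as $\varphi_n$ is a bijection onto $\{-1,1\}^{L_n}$, the number of $s \in K_n$ with $\sigma_B(s) = 2k - b$ equals $2^{n-b}\binom{b}{k}$. Hence at fixed $B$, $\mu_n(A \times B)$ is maximized by taking $A = \{s : \sigma_B(s) > 0\}$, which produces $\tfrac{2^{n-b}}{n 2^n}\sum_{k > b/2}(2k - b)\binom{b}{k}$. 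Two elementary calculations remain: the identity
\[
\sum_{k > b/2}(2k - b)\binom{b}{k} = \left\lceil \tfrac{b}{2} \right\rceil \binom{b}{\lceil b/2 \rceil},
\]
which follows from $k\binom{b}{k} = b \binom{b-1}{k-1}$ together with the standard Pascal symmetries; and the monotonicity of $f(b) := \lceil b/2 \rceil \binom{b}{\lceil b/2 \rceil}/2^b$, verified by computing consecutive ratios (one finds $f(2m-1) = f(2m)$ and $f(2m+1)/f(2m) = (2m+1)/(2m)$), so the supremum over $b \in \{1,\dots,n\}$ is attained at $b = n$. This yields (2), and the set $K_{(n)} := \{s \in K_n : \sigma_{L_n}(s) > 0\}$ witnesses (3).

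Item (4) follows from (2) by quantitative Stirling bounds on $\binom{n}{\lceil n/2 \rceil}/2^n$: the asymptotic $\binom{2m}{m} \sim 4^m/\sqrt{\pi m}$ gives leading behavior $1/\sqrt{2\pi n}$, and standard two-sided effective Stirling inequalities cover both ends of (4) for every $n$. For (5), the tensor bound comes from the layer-cake representation: for $f,g \geq 0$,
\[
f \otimes g = \int_0^{\|f\|_\infty}\int_0^{\|g\|_\infty}\mathbf{1}_{\{f > s\}} \otimes \mathbf{1}_{\{g > t\}}\, ds\, dt,
\]
so $|\mu_n(f \otimes g)| \leq \|f\|_\infty \|g\|_\infty \cdot \sup_{A \times B}|\mu_n(A \times B)|$; splitting a general $f, g$ into $f^+ - f^-$ and $g^+ - g^-$ adds a factor of $4$, and combining with (4) yields the factor $8/\sqrt{\pi n}$. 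For $f \oplus g$, two cancellations do the work: first, $\mu_n(1 \otimes g) = 0$ for every $g$, since each column sum $\sum_{s \in K_n}\varphi_n(s)(j)$ vanishes, so $\mu_n(f \oplus g) = \mu_n(f \otimes 1)$; second, $\mu_n(1) = 0$, so we may center $f$ as $\tilde f := f - \tfrac{\max f + \min f}{2}$ without changing $\mu_n(f \otimes 1)$. The tensor bound applied to $\tilde f \otimes 1$ then gives $|\mu_n(f \otimes 1)| \leq \tfrac{8}{\sqrt{\pi n}} r_f$ with $r_f := (\max f - \min f)/2$, and $r_f \leq r_f + r_g \leq \|f \oplus g\|_\infty$ (since $r_f + r_g$ equals the half-oscillation of $f \oplus g$, which is bounded above by its sup-norm), completing (5).

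Finally, for the JN conclusion: since $\mu_n$ lives on the compact set $K \times L$, one has $\mu_n(h) = \mu_n(h|_{K \times L})$ for every $h \in C(X \times Y)$. By Stone--Weierstrass on $K \times L$, $h|_{K \times L}$ is uniformly approximated by finite tensor sums $\sum_i f_i \otimes g_i$; applying (5) term by term and using $\|\mu_n\| = 1$ on the approximation error yields $\mu_n(h) \to 0$. The main obstacle is expected to be the combinatorial identity together with the monotonicity verification in (2); once these are in place, the remaining items follow routinely from Stirling, the layer-cake representation, and Stone--Weierstrass.
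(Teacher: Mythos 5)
Your proof is correct, and for items (1)--(4) it follows essentially the same path as the paper: partitioning $K_n$ by the value of $\sigma_B$, the exact count $2^{n-b}\binom{b}{k}$, the identity $\sum_{k\geq\lceil b/2\rceil}(2k-b)\binom{b}{k}=\lceil b/2\rceil\binom{b}{\lceil b/2\rceil}$ via $k\binom{b}{k}=b\binom{b-1}{k-1}$, and the monotonicity of $b\mapsto 2^{-b}\lceil b/2\rceil\binom{b}{\lceil b/2\rceil}$ (the paper merely asserts this; your ratio computation $f(2m-1)=f(2m)$ and $f(2m+1)/f(2m)=(2m+1)/(2m)$ is exactly the needed verification). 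For (4) the paper derives the effective two-sided bound $4^m/\sqrt{\pi(m+1)}<\binom{2m}{m}<4^m/\sqrt{\pi m}$ from the Wallis product, whereas you appeal to ``effective Stirling''; you should pin down that same inequality explicitly, since the crude asymptotic alone does not give (4) for every $n$. The genuine differences are in (5) and the final density step. For the tensor bound the paper uses a sign-decoupling argument (choosing $t\in\{-1,1\}^{K_n}$ and $d\in\{-1,1\}^{L_n}$ and splitting into four rectangles), while you use the layer-cake representation together with $f=f^{+}-f^{-}$; both yield the constant $8/\sqrt{\pi n}$. For $f\oplus g$ your argument is in fact more careful than the paper's: the paper passes from $\frac{8}{\sqrt{\pi n}}(\|f\|_{\infty}+\|g\|_{\infty})$ to $\frac{8}{\sqrt{\pi n}}\|f\oplus g\|_{\infty}$, which would require $\|f\|_{\infty}+\|g\|_{\infty}\leq\|f\oplus g\|_{\infty}$ --- false in general (take $f\equiv 1$, $g\equiv -1$); your observation that $\mu_n$ annihilates $\mathbf{1}_K\otimes g$ and constants, combined with centering $f$ and the inequality $r_f+r_g\leq\|f\oplus g\|_{\infty}$, gives a clean and correct derivation of the stated bound. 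Finally, for $\mu_n(F)\to_n 0$ the paper takes a detour through $\beta K_0\times\beta L_0$ and the algebra generated by indicators of clopen rectangles, whereas you apply Stone--Weierstrass directly to $\mathrm{lin}\,\{f\otimes g\}$ in $C(K\times L)$ and use (5) term by term with $\|\mu_n\|=1$ absorbing the approximation error; your route is shorter and is essentially the one the paper itself employs later in the proof of its Theorem 4.
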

\begin{remark}
(1) Let us note that the surprising property (4) of the above Theorem \ref{MAIN} is the result of the construction we present, which is completely different from the one shown in the mentioned theorem from  \cite[Theorem 1.2]{Kakol-Sobota-Zdomskyy}.
(2) Note that the assumptions on spaces $X$ and $Y$ in Theorem \ref{MAIN} cannot be omitted, see Example \ref{example1} and Example \ref{example2} below.
\end{remark}

In \cite[Theorem 1.1]{BKS1} we proved the following  fundamental
\begin{theorem}[Banakh--K\c akol--\'Sliwa]\label{BKS}
For  a Tychonoff space $X$ the space  $C_p(X)$  has JNP if and only if $C_p(X)$ contains a complemented  copy of the space $(c_0)_p$, i.e. the space $c_0$ endowed with the topology inherited from the product $\mathbb{R}^\mathbb{N}$.
\end{theorem}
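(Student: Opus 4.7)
The statement is an equivalence, so the plan is to prove both directions. The direction from a complemented $(c_0)_p$ to the JNP is straightforward: if $P: C_p(X) \to E \subset C_p(X)$ is a continuous projection and $T: (c_0)_p \to E$ is a topological isomorphism, then the coordinate functionals $e_n^*$ on $(c_0)_p$ form a JN-sequence there (each is norm one as a Dirac measure on $\mathbb{N}$, and $e_n^*(x) = x_n \to 0$ for every $x \in c_0$). Pulling these back through $T^{-1} \circ P$ produces functionals $\nu_n \in C_p(X)^\ast = L_p(X)$ which are finitely supported sign-measures with $\nu_n(f) \to 0$ for every $f \in C(X)$ and with $\|\nu_n\|$ uniformly bounded below and above in $L_p(X)$; after normalization these form a JN-sequence.

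The nontrivial direction proceeds by building a biorthogonal system. Given a JN-sequence $(\mu_n)$ with $\mu_n = \sum_{x \in F_n} a_x^n \delta_x$, $F_n = \supp(\mu_n)$ finite and $\sum_{x \in F_n} |a_x^n| = 1$, the first step is to pass to a subsequence so that the supports $F_n$ become pairwise disjoint. This is possible because a point carried by infinitely many $\mu_n$ with coefficients bounded away from zero could be isolated by a Tychonoff bump function that contradicts $\mu_n(f) \to 0$. The second step uses the Tychonoff separation axiom: since the finite sets $F_n$ are pairwise disjoint, one can choose (after further thinning if necessary) pairwise disjoint cozero neighborhoods $U_n \supset F_n$ and continuous $f_n : X \to [-1,1]$ supported in $U_n$ with $f_n(x) = \sign(a_x^n)$ for $x \in F_n$. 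By construction $\mu_m(f_n) = \delta_{mn}$ for all $m, n$.

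Define $T: (c_0)_p \to C_p(X)$ by $T((a_n)) = \sum_n a_n f_n$ and $P: C_p(X) \to C_p(X)$ by $P(f) = \sum_n \mu_n(f) f_n$. Since the $f_n$ have pairwise disjoint supports, the sums are pointwise finite at each $x \in X$ (at most one term is non-zero), so $T$ takes values in $C(X)$ and both $T$ and $P$ are continuous in the pointwise topology, because each $\mu_n$ is a pointwise-continuous linear functional on $C_p(X)$. The biorthogonality relation $\mu_m(f_n) = \delta_{mn}$ shows that $P$ is a projection onto $T((c_0)_p)$ and that $T^{-1}: T((c_0)_p) \to (c_0)_p$ is continuous (since $(T^{-1}(g))_n = \mu_n(g)$). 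Hence $T((c_0)_p)$ is a complemented copy of $(c_0)_p$ inside $C_p(X)$.

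The main obstacle is arranging the $f_n$ to have pairwise disjoint supports in $X$, not merely to vanish on the finite sets $F_m$ with $m \neq n$. Without this stronger property the sum defining $T$ need not land in $C(X)$, and pointwise continuity of $T$ and $P$ may fail. This requires combining Tychonoff-style separation of disjoint finite sets with a careful subsequence selection so that the neighborhoods $U_n$ of the $F_n$ can be chosen pairwise disjoint; the existence of a JN-sequence is then used in an essential way to ensure that such a thinning still produces a sequence satisfying $\mu_n(f) \to 0$ on the relevant test functions.
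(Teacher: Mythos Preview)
The paper does not prove this theorem; it is quoted from \cite{BKS1} and used as a black box. What the paper does contain is Proposition~\ref{LAST}, which carries out exactly the $(T,S,P)$ construction you describe, but under the additional hypothesis that the supports $F_n=\supp(\mu_n)$ are pairwise disjoint \emph{and their union is discrete in $X$}. That extra hypothesis is precisely what makes your ``main obstacle'' disappear, and the paper does not attempt to remove it here; when it needs the full strength (Proposition~\ref{pro}) it invokes \cite[Theorem 1.2]{MSZ} for the existence of a JN-sequence whose supports can be separated by pairwise disjoint open sets.

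Your sketch has the right architecture, but the reduction step is not established. The one-line argument for disjoint supports (``a point carried by infinitely many $\mu_n$ with coefficients bounded away from zero could be isolated by a bump function, contradicting $\mu_n(f)\to 0$'') does not work: a bump $f$ concentrated near $x$ also picks up the other points of $F_n$ near $x$, and these contributions can cancel $a_x^n$; moreover, failure to extract a disjoint-support subsequence does not force a single recurring point with coefficients bounded below. Even granting disjoint finite supports, a Tychonoff space need not admit pairwise disjoint open $U_n\supset F_n$ unless $\bigcup_n F_n$ is discrete, and you only assert that a ``careful subsequence selection'' handles this without indicating how. This reduction is the genuine content of the theorem, and it is exactly what the cited papers \cite{BKS1} and \cite{MSZ} supply. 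A smaller gap sits in your easy direction: from $\nu_n=e_n^*\circ T^{-1}\circ P$ you get $\nu_n(f)\to 0$ and $\nu_n\neq 0$, but the claim that $\|\nu_n\|$ is bounded below needs an argument, since $T$ is only assumed continuous for the pointwise topology and nothing bounds $\|T(e_n)\|_\infty$.
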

This combined with our Theorem \ref{MAIN} applies immediately to get \cite[Corollary 2.6]{KMSZ} stating that whenever $X$ and $Y$ contain infinite compact subsets, then $C_p(X\times Y)$ has a complemented copy of $(c_0)_p$.
We prove a stronger fact, see  Corollary \ref{LAST1}. Moreover, if $X$  and $Y$ are  locally compact and $\sigma$-compact spaces, the metrizable and complete lcs  $C_k(X\times Y)$   contains either a complemented copy of $\mathbb{R}^{\mathbb{N}}$ or a complemented copy of the Banach space $c_0$, see Corollary \ref{BKS}.

\bigskip

\section{Proof of Theorem \ref{MAIN} and corollaries}
\bigskip

\begin{proof}

Let $n\in \N.$ Clearly $\|\mu_n\|=1$ and $\mbox{supp} (\mu_n)=K_n \times L_n$.

Let $\emptyset \neq A \subset K_n, \emptyset \neq B \subset L_n$ and $k=|B|.$ For $0\leq i \leq k$ let $A_i$ be the set of all $s\in A$ such that $$|\{j\in B:  \varphi_n(s)(j)=1\}|=i.$$ Clearly $|A_i|\leq \binom{k}{i}2^{n-k};$ if $A=K_n$, then $|A_i|=2^{n-k}\binom{k}{i}.$ We have
\[ \mu_n(A\times B)=\frac{1}{n2^n}\sum_{(s,j)\in A\times B}\varphi_n(s)(j)= \frac{1}{n2^n}\sum_{i=0}^k \sum_{s\in A_i} \sum_{j\in B} \varphi_n(s)(j)=\] \[ \frac{1}{n2^n}\sum_{i=0}^k \sum_{s\in A_i} [i-(k-i)]=\frac{1}{n2^n}\sum_{i=0}^k \sum_{s\in A_i} (2i-k)\leq \frac{1}{n2^n}\sum_{i=\lceil \frac{k}{2}\rceil}^k \sum_{s\in A_i} (2i-k)=\] \[ \frac{1}{n2^n}\sum_{i=\lceil \frac{k}{2}\rceil}^k (2i-k)|A_i|\leq \frac{1}{n2^n}\sum_{i=\lceil \frac{k}{2}\rceil}^k (2i-k)\binom{k}{i}2^{n-k}=\frac{1}{n2^k}\sum_{i=\lceil \frac{k}{2}\rceil}^k (2i-k)\binom{k}{i}.\]
Since \[\sum_{i=\lceil \frac{k}{2}\rceil}^k \binom{k}{i}=\sum_{i=\lceil \frac{k}{2}\rceil}^k \binom{k}{k-i}=\sum_{j=0}^{[ \frac{k}{2}]} \binom{k}{j}\; \mbox{and}\; \sum_{i=0}^k \binom{k}{i}=2^k\] we get $\sum_{i=\lceil \frac{k}{2}\rceil}^k \binom{k}{i}=2^{k-1}$ if $k$ is odd and $\sum_{i=\lceil \frac{k}{2}\rceil}^k \binom{k}{i}=2^{k-1}+\frac{1}{2}\binom{k}{k/2}$ if $k$ is even.

Since $i\binom{k}{i}=k\binom{k-1}{i-1}$ for $1\leq i \leq k$, we obtain \[S_k:=\sum_{i=\lceil \frac{k}{2}\rceil}^k (2i-k)\binom{k}{i}=\left\lceil\frac{k}{2}\right\rceil\binom{k}{\lceil\frac{k}{2}\rceil}.\]
Indeed, if $k$ is odd, then
\[S_k=2\sum_{i=\lceil \frac{k}{2}\rceil}^k i\binom{k}{i}- k\sum_{i=\lceil \frac{k}{2}\rceil}^k \binom{k}{i}=2k\sum_{i=\lceil \frac{k}{2}\rceil -1}^{k-1} \binom{k-1}{i}-k2^{k-1}=\] \[2k\sum_{i=\lceil \frac{k-1}{2}\rceil}^{k-1} \binom{k-1}{i}-k2^{k-1}=2k\left [2^{k-2}+\frac{1}{2}\binom{k-1}{\frac{k-1}{2}}\right ]-k2^{k-1}=\] \[k\binom{k-1}{\frac{k+1}{2}-1}=\frac{k+1}{2}\binom{k}{\frac{k+1}{2}}=\left\lceil\frac{k}{2}\right\rceil\binom{k}{\lceil\frac{k}{2}\rceil};\]
if $k$ is even, then
\[S_k=2\sum_{i=\lceil \frac{k}{2}\rceil}^k i\binom{k}{i}- k\sum_{i=\lceil \frac{k}{2}\rceil}^k \binom{k}{i}=2k\sum_{i=\lceil \frac{k}{2}\rceil -1}^{k-1} \binom{k-1}{i}-k\left (2^{k-1}+\frac{1}{2}\binom{k}{k/2}\right )=\] \[2k\left ( \sum_{i=\lceil \frac{k-1}{2}\rceil}^{k-1} \binom{k-1}{i}+\binom{k-1}{\frac{k}{2}-1}\right )-k2^{k-1}-\frac{k}{2}\binom{k}{k/2}=\] \[2k2^{k-2}+2k\binom{k-1}{\frac{k}{2}-1}-k2^{k-1}-\frac{k}{2}\binom{k}{k/2}=2k\binom{k-1}{\frac{k}{2}-1}-\frac{k}{2}\binom{k}{k/2} =\] \[k\binom{k}{k/2}- \frac{k}{2}\binom{k}{k/2}=\frac{k}{2}\binom{k}{k/2}=\left\lceil\frac{k}{2}\right\rceil\binom{k}{\lceil\frac{k}{2}\rceil}.\]
It follows that \[\mu_n(A\times B)\leq \frac{1}{n2^k} \left\lceil\frac{k}{2}\right\rceil\binom{k}{\lceil\frac{k}{2}\rceil}.\]

Clearly, $-\mu_n(A\times B)=\mu_n(A'\times B)$, where $A'\subset K_n$ with $\varphi_n (A')=\{-s: s\in \varphi_n (A)\}.$ Thus \[\sup_{A\subset K_n} |\mu_n(A\times B)|\leq \frac{1}{n2^k} \left\lceil\frac{k}{2}\right\rceil\binom{k}{\lceil\frac{k}{2}\rceil}.\]
For $0\leq i\leq n$ let $B_{(n,i)}$ be the set of all $s\in K_n$ such that

$|\{j\in B: \varphi_n(s)(j)=1\}|=i$; clearly $|B_{(n,i)}|=2^{n-k}\binom{k}{i}.$

\smallskip

Put $B_{(n)}=\bigcup_{i=\lceil \frac{n}{2}\rceil}^n B_{(n,i)}.$ Then as above we get $\mu_n(B_{(n)}\times B)=$ \[\frac{1}{n2^n}\sum_{i=\lceil \frac{k}{2}\rceil}^k (2i-k) |B_{(n,i)}|=\frac{1}{n2^k}\sum_{i=\lceil \frac{k}{2}\rceil}^k (2i-k) \binom{k}{i}=\frac{1}{n2^k}\left \lceil \frac{k}{2}\right \rceil\binom{k}{\lceil \frac{k}{2}\rceil}.\]
Thus \[\sup_{A\subset K_n} |\mu_n(A\times B)|= \frac{1}{n2^k} \left\lceil\frac{k}{2}\right\rceil\binom{k}{\lceil\frac{k}{2}\rceil}=\frac{m}{n}\frac{1}{4^m}\binom{2m}{m},\; \mbox{where}\;m=\left \lceil \frac{k}{2}\right \rceil.\]

It is easy to see that the function $g: \N \to \R, g(k)=\frac{1}{2^k} \left\lceil\frac{k}{2}\right\rceil\binom{k}{\lceil\frac{k}{2}\rceil}$ is non-decreasing. Thus \[\sup_{A\times B\subset K_n\times L_n} |\mu_n(A\times B)|= \frac{1}{n2^n} \left\lceil\frac{n}{2}\right\rceil\binom{n}{\lceil\frac{n}{2}\rceil}=\frac{m}{n}\frac{1}{4^m}\binom{2m}{m},\; \mbox{where}\; m=\left \lceil \frac{n}{2}\right \rceil.\]

By Wallis's formula we get \[ W_m:= \prod_{i=1}^m \frac{(2i-1)(2i+1)}{(2i)(2i)}\to_m \frac{2}{\pi}.\] It is easy to see that
\[ \frac{2m+1}{4^{2m}}\binom{2m}{m}^2=W_m=\prod_{i=1}^m \left (1-\frac{1}{4i^2}\right )\searrow_m \frac{2}{\pi}\] and \[\frac{2m}{4^{2m}}\binom{2m}{m}^2=\frac{2m}{2m+1}W_m=\frac{1}{2}\prod_{i=1}^{m-1} \left (1+\frac{1}{4i(i+1)}\right )\nearrow_m \frac{2}{\pi}.\]
It follows that \[\frac{4^m}{\sqrt{\pi (m+1)}} < \binom{2m}{m} < \frac{4^m}{\sqrt{\pi m}}\; \mbox{for every}\; m\in \N.\]
Thus
\[\frac{m}{n}\frac{1}{\sqrt{\pi (m+1)}}< \sup_{A\subset K_n} |\mu_n(A\times B)| <\frac{m}{n}\frac{1}{\sqrt{\pi m}}\] for every $\emptyset \neq B\subset L_n$ and $m=\left \lceil\frac{|B|}{2}\right \rceil.$
Hence we get
\[\frac{1}{2\sqrt{\pi}}\frac{1}{\sqrt{n}}< \sup_{A\times B\subset K_n\times L_n} |\mu_n(A\times B)| <\frac{2}{\sqrt{\pi}}\frac{1}{\sqrt{n}}\; \mbox{for every}\; n\in \N.\]

\bigskip

 Let $f\in C(K)$ and $g\in C(L).$
 Put $$\mu_n(s,j)= \mu_n(\{(s,j)\}),\,\, (s,j) \in K_n\times L_n.$$

\bigskip

 For some $t\in \{-1,1\}^{K_n}$ and $d\in \{-1,1\}^{L_n}$ we have
\[|\mu_n(f\otimes g)|=\left |\sum_{(s,j)\in K_n\times L_n} f(s)g(j)\mu_n(s,j)\right |=\] \[\left |\sum_{s\in K_n} f(s) \sum_{j\in L_n} g(j)\mu_n(s,j)\right |\leq \sum_{s\in K_n} |f(s)| \left |\sum_{j\in L_n} g(j)\mu_n(s,j)\right |\leq\] \[ \|f\|_{\infty} \sum_{s\in K_n} \left |\sum_{j\in L_n} g(j)\mu_n(s,j)\right |= \|f\|_{\infty} \sum_{s\in K_n} t(s)\left (\sum_{j\in L_n} g(j)\mu_n(s,j)\right )=\] \[\|f\|_{\infty}\sum_{j\in L_n} g(j) \left (\sum_{s\in K_n} t(s)\mu_n(s,j)\right )\leq \] \[\|f\|_{\infty}\sum_{j\in L_n} |g(j)| \left |\sum_{s\in K_n} t(s)\mu_n(s,j)\right |\leq
\|f\|_{\infty}\|g\|_{\infty}\sum_{j\in L_n} \left |\sum_{s\in K_n} t(s)\mu_n(s,j)\right |=\] \[ \|f\|_{\infty}\|g\|_{\infty}\sum_{j\in L_n} d(j)\left ( \sum_{s\in K_n} t(s)\mu_n(s,j)\right )=\] \[\|f\|_{\infty}\|g\|_{\infty}\sum_{(s,j)\in K_n\times L_n} t(s)d(j)\mu_n(s,j).\]
Put $K_n'=\{s\in K_n: t(s)=1\}, K_n''=K_n\setminus K_n', L_n'=\{j\in L_n: d(j)=1\}$ and $L_n''=L_n\setminus L_n'$. Then \[\sum_{(s,j)\in K_n\times L_n} t(s)d(j)\mu_n(s,j)=
\sum_{(s,j)\in K_n'\times L_n'}\mu_n(s,j) +\] \[ \sum_{(s,j)\in K_n''\times L_n''} \mu_n(s,j)- \sum_{(s,j)\in K_n'\times L_n''} \mu_n(s,j) -\sum_{(s,j)\in K_n''\times L_n'} \mu_n(s,j)=\] \[ \mu_n(K_n'\times L_n')+ \mu_n(K_n''\times L_n'')- \mu_n(K_n'\times L_n'') -  \mu_n(K_n''\times L_n')\leq\] \[ 4\sup _{A\times B \subset K\times L} |\mu_n(A\times B)|\leq \frac{8}{\sqrt{\pi}}\frac{1}{\sqrt{n}}.\]
Thus \[|\mu_n(f\otimes g)|\leq \frac{8}{\sqrt{\pi}}\frac{1}{\sqrt{n}}\|f\otimes g\|_{\infty}.\] Hence \[|\mu_n(f\oplus g)|= |\mu_n(f\otimes {\bf 1}_L) + \mu_n({\bf 1}_K \otimes g)\leq \frac{8}{\sqrt{\pi}}\frac{1}{\sqrt{n}} (\|f\|_{\infty} +\|g\|_{\infty}),\] so \[|\mu_n(f\oplus g)|\leq \frac{8}{\sqrt{\pi}}\frac{1}{\sqrt{n}}\|f\oplus g\|_{\infty}.\]

\bigskip

We know that for every $A\times B \subset X\times Y$ we have
\[\mu_n(A\times B)= \mu_n ((A\cap K_n)\times (B\cap L_n)) \to_n 0.\]

Denote the family of all clopen subsets of a topological space $S$ by ${\mathcal C}(S)$. Put $K_0=\bigcup_{n=1}^{\infty} K_n$ and $L_0=\bigcup_{n=1}^{\infty}L_n$. The countable subspaces $K_0\subset K$ and $L_0 \subset L$ are strongly zero-dimensional \cite[Corollary 6.2.8]{Eng}, so $\beta K_0$ and $\beta L_0$ are zero-dimensional  \cite[Theorems 6.2.12 and 6.2.6]{Eng}. Thus for every two different points $(x_1, y_1)$ and $(x_2, y_2)$ of $Z=\beta K_0 \times \beta L_0$ there exist sets $A\in {\mathcal C}(\beta K_0)$ and $B \in {\mathcal C}(\beta L_0)$ such that $(x_1, y_1)\in A\times B$ and  $(x_2, y_2) \not\in A\times B$.

Thus the subalgebra \[{\mathcal A}:=\mbox{lin}\; \{{\bf 1}_{A\times B}: A\in {\mathcal C}(\beta K_0), B\in {\mathcal C}(\beta L_0)\}\] of $C(Z)$ separates points of $Z$. Using the Stone-Weierstrass theorem we infer that ${\mathcal A}$ is dense in $C(Z)$. It follows that $\mu_n(f) \to_n 0$ for every $f\in C(Z)$, since $\sup_n \|\mu_n\|<\infty$ and, as one can easily see, $\mu_n(f) \to_n 0$ for every $f\in {\mathcal A}$. By \cite[Corollary 3.6.6]{Eng} there exist continuous maps \[G: \beta K_0 \to K \;\mbox{and}\; H: \beta L_0 \to L\] such that $G(x)=x$ and $H(y)=y$ for all $x\in K_0$ and $y\in L_0$.

For every $F\in C(X\times Y)$ the function \[f: Z \to \R, z=(x,y) \to f(z)=F(G(x),H(y))\] is continuous and $F(x,y)=f(x,y)$ for all $(x,y)\in K_0\times L_0.$ Hence $\mu_n(F)=\mu_n(f)$ for $n\in \N$, since $\mbox{supp}\;(\mu_n) \subset K_0\times L_0.$ Thus $\mu_n(F) \to_n 0$ for every $F\in C(X\times Y).$
\end{proof}

\begin{corollary} Let $X$ and $Y$ be Tychonoff spaces that contain infinite compact subspaces $K$ and $L$, respectively. Let $(K_n\times L_n)$ be a sequence of finite subsets of $K\times L$, such that $|K_n|\to_n \infty$ and $|L_n|\to_n \infty$. Then there exists a sequence $(\mu_n)$ of normalized finitely supported signed measures on $X\times Y$ such that

(1) $\mbox{supp}\, (\mu_n)\subset K_n \times L_n$ for $n\in \N;$

(2) $\mu_n(A\times B)\to_n 0$ for every $A\times B\subset X\times Y$;

(3) $\mu_n(f) \to_n 0$ for every $f\in C(X \times Y).$
\end{corollary}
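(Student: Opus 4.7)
The plan is to reduce the corollary to Theorem \ref{MAIN} by fitting the rigid $|K|=2^n$, $|L|=n$ size condition of that theorem inside the given sets $K_n$ and $L_n$. Since $|K_n|\to\infty$ and $|L_n|\to\infty$, set $k_n:=\min\{\lfloor\log_2|K_n|\rfloor,|L_n|\}$, so that $k_n\to\infty$, $2^{k_n}\le|K_n|$, and $k_n\le|L_n|$. Choose finite sets $\widetilde K_n\subset K_n$ and $\widetilde L_n\subset L_n$ with $|\widetilde K_n|=2^{k_n}$ and $|\widetilde L_n|=k_n$, pick a bijection $\varphi_n\colon\widetilde K_n\to\{-1,1\}^{\widetilde L_n}$, and define
\[\mu_n:=\frac{1}{k_n 2^{k_n}}\sum_{(s,j)\in\widetilde K_n\times\widetilde L_n}\varphi_n(s)(j)\,\delta_{(s,j)}.\]
This is exactly the measure furnished by Theorem \ref{MAIN} with parameter $k_n$ and data $(\widetilde K_n,\widetilde L_n,\varphi_n)$; hence $\|\mu_n\|=1$ and $\supp(\mu_n)=\widetilde K_n\times\widetilde L_n\subset K_n\times L_n$, giving (1).

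For (2), for any rectangle $A\times B\subset X\times Y$, Theorem \ref{MAIN}(4) applied to $\mu_n$ yields
\[|\mu_n(A\times B)|=|\mu_n((A\cap\widetilde K_n)\times(B\cap\widetilde L_n))|<\frac{2}{\sqrt{\pi}}\frac{1}{\sqrt{k_n}},\]
which tends to $0$ since $k_n\to\infty$. For (3), I reuse verbatim the Stone--Weierstrass step from the end of the proof of Theorem \ref{MAIN}, applied to the countable sets $K_0:=\bigcup_n\widetilde K_n\subset K$ and $L_0:=\bigcup_n\widetilde L_n\subset L$. These are strongly zero-dimensional, so $\beta K_0$ and $\beta L_0$ are zero-dimensional; the subalgebra $\mathcal A$ of $C(\beta K_0\times\beta L_0)$ generated by $\{\mathbf 1_{A\times B}:A\in\mathcal C(\beta K_0),\,B\in\mathcal C(\beta L_0)\}$ separates points and is therefore dense. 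Since $\|\mu_n\|=1$ and, by~(2), $\mu_n(\mathbf 1_{A\times B})\to 0$ on every such rectangle, boundedness plus density gives $\mu_n(f)\to 0$ for every $f\in C(\beta K_0\times\beta L_0)$. Pulling back along the continuous extensions $G\colon\beta K_0\to K$ and $H\colon\beta L_0\to L$ that fix $K_0$ and $L_0$ (guaranteed by \cite[Corollary 3.6.6]{Eng}), one concludes $\mu_n(F)\to 0$ for every $F\in C(X\times Y)$, since $\supp(\mu_n)\subset K_0\times L_0$.

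The main obstacle, such as it is, is the purely bookkeeping issue of ensuring the chosen $k_n$ really grows; this is clear from the definition once both $|K_n|\to\infty$ and $|L_n|\to\infty$ are used simultaneously. Beyond that the corollary is essentially a re-indexing of Theorem \ref{MAIN}, so neither new combinatorics nor a new density argument are required: the quantitative estimate in (4) of the theorem drives both the rectangle convergence in (2) and, via Stone--Weierstrass, the functional convergence in (3).
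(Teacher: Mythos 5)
Your proposal is correct and follows essentially the same route as the paper: both extract subsets of sizes $2^m\times m$ with $m\to\infty$ inside $K_n\times L_n$, apply the construction and the estimate (4) of Theorem \ref{MAIN} with that growing parameter, and conclude (3) by the same boundedness-plus-density (Stone--Weierstrass) step. The only point to patch is the finitely many indices where $k_n=\min\{\lfloor\log_2|K_n|\rfloor,|L_n|\}$ equals $0$ (e.g. when $|K_n|=1$), where your normalizing factor $\frac{1}{k_n2^{k_n}}$ divides by zero; the paper sidesteps this by letting $\mu_n$ be a Dirac measure at a point of $K_n\times L_n$ for those initial indices, which affects neither the normalization nor any of the limits.
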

\begin{proof}
For every $m\in \N$ there exists $\phi_m\in \N$ such that $|K_n|\geq 2^m$ and $|L_n|\geq m$ for $n\geq \phi_m.$  Without loss of generality we can assume that the sequence $(\phi_m)$ is strictly increasing and $\phi_1>1.$ Let $K_n'\times L_n' \subset K_n\times L_n$ such that $|K_n'|=1, |L_n'|=1$ if $1\leq n< \phi_1$ and $|K_n'|=2^m, |L_n'|=m$ if $\phi_m\leq n<\phi_{m+1}, m\in \N.$

For $1\leq n< \phi_1$ we put $\mu_n = \delta_{(x_n,y_n)},$ where $\{(x_n,y_n)\}=K_n'\times L_n'.$

Let $\varphi_n: K_n' \to 2^{L_n'}$ be a bijection for $n\geq \phi_1.$ Let $$\mu_n= \sum_{(s,j)\in K_n'\times L_n'} \varphi_n(s)(j) \delta_{(s,j)}$$ for $n\geq \phi_1.$ Clearly $\|\mu_n\|=1$ and $\mbox{supp}\; (\mu_n)=K_n'\times L_n' \subset K_n\times L_n$ for $n\in \N.$

By Theorem 1 and its proof we infer that for every $A\times B \subset X\times Y$ we have $$|\mu_n (A\times B)| \leq \frac{2}{\sqrt{\pi}}\frac{1}{\sqrt{m}}$$ for $\phi_m\leq n<\phi_{m+1}, m\in \N$, so $\mu_n (A\times B) \to_n 0$ and $\mu_n (f) \to_n 0$ for every $f\in C(X\times Y).$
 \end{proof}

 By the proof of Theorem \ref{MAIN} we get also the following corollary.

\begin{corollary} Let $(\mu_n)$ be the sequence from Theorem 1. Then  \[\sup_{A\subset K_n} |\mu_n(A\times B)|=\frac{1}{n2^k}\sum_{i=\lceil \frac{k}{2}\rceil}^k (2i-k) \binom{k}{i}= \frac{1}{n2^k} \left\lceil\frac{k}{2}\right\rceil\binom{k}{\lceil\frac{k}{2}\rceil}=\frac{m}{n}\frac{1}{4^m}\binom{2m}{m}\] and \[\frac{m}{n}\frac{1}{\sqrt{\pi (m+1)}}< \sup_{A\subset K_n} |\mu_n(A\times B)| <\frac{m}{n}\frac{1}{\sqrt{\pi m}}\] for every $\emptyset \neq B\subset L_n$ and $k=|B|, m=\left \lceil\frac{k}{2}\right \rceil, n\in \N.$
\end{corollary}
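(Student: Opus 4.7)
The plan is to extract the statement directly from intermediate computations already carried out in the proof of Theorem \ref{MAIN}, since there the supremum is first computed for fixed $B\subset L_n$ and only afterward optimized over $B$. So I would structure the proof as a short argument isolating those three ingredients.

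First, I fix $\emptyset\neq B\subset L_n$ with $|B|=k$ and set $A\subset K_n$. Partitioning $A$ into the sets $A_i=\{s\in A:|\{j\in B:\varphi_n(s)(j)=1\}|=i\}$ for $0\le i\le k$, I would redo the chain of estimates from the proof of Theorem \ref{MAIN}: the inner sum $\sum_{j\in B}\varphi_n(s)(j)$ is constantly $2i-k$ on $A_i$, so $n2^n\mu_n(A\times B)=\sum_{i=0}^k(2i-k)|A_i|$, and dropping the (non-positive) terms with $i<\lceil k/2\rceil$ then using $|A_i|\le\binom{k}{i}2^{n-k}$ bounds this above by $\frac{1}{n2^k}\sum_{i=\lceil k/2\rceil}^k(2i-k)\binom{k}{i}$. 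Equality is attained: choosing $A=B_{(n)}:=\bigcup_{i\ge\lceil k/2\rceil}B_{(n,i)}$ makes every inequality tight, and replacing $B_{(n)}$ by its ``mirror image'' $A'$ (the preimage of $-\varphi_n(B_{(n)})$) handles the absolute value. This establishes the first equality in the corollary.

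Next, the middle equality $\sum_{i=\lceil k/2\rceil}^k(2i-k)\binom{k}{i}=\lceil k/2\rceil\binom{k}{\lceil k/2\rceil}$ is precisely the identity $S_k$ computed in the proof of Theorem \ref{MAIN}, obtained from $i\binom{k}{i}=k\binom{k-1}{i-1}$ together with the elementary facts $\sum_{i=\lceil k/2\rceil}^k\binom{k}{i}=2^{k-1}$ (odd $k$) or $2^{k-1}+\tfrac12\binom{k}{k/2}$ (even $k$). For the third form, a short parity case-check gives $\frac{1}{2^k}\lceil k/2\rceil\binom{k}{\lceil k/2\rceil}=\frac{m}{4^m}\binom{2m}{m}$ with $m=\lceil k/2\rceil$: when $k=2m$ is even this is immediate, and when $k=2m-1$ is odd one uses $\binom{2m-1}{m}=\tfrac12\binom{2m}{m}$ so that $\frac{m\binom{2m-1}{m}}{2^{2m-1}}=\frac{m\binom{2m}{m}}{4^m}$.

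Finally, for the inequalities I would invoke the Wallis bounds $\dfrac{4^m}{\sqrt{\pi(m+1)}}<\binom{2m}{m}<\dfrac{4^m}{\sqrt{\pi m}}$ that were derived in the proof of Theorem \ref{MAIN} from the monotone squeezing of $W_m=\prod_{i=1}^m(1-\tfrac{1}{4i^2})\searrow\tfrac{2}{\pi}$ and $\tfrac{2m}{2m+1}W_m\nearrow\tfrac{2}{\pi}$. Substituting into $\frac{m}{n}\cdot\frac{1}{4^m}\binom{2m}{m}$ yields the two-sided bound $\frac{m}{n\sqrt{\pi(m+1)}}<\sup_{A\subset K_n}|\mu_n(A\times B)|<\frac{m}{n\sqrt{\pi m}}$. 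Since every step is essentially a relabeling of formulas already appearing in the proof of Theorem \ref{MAIN}, the only real ``work'' is to present the computations cleanly with $B$ held fixed and to verify the parity case for the central-binomial rewriting; no new estimate or combinatorial identity is needed.
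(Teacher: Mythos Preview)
Your proposal is correct and follows exactly the paper's approach: the paper's proof of this corollary is literally the single sentence ``By the proof of Theorem \ref{MAIN} we get also the following corollary,'' and you have simply spelled out which intermediate computations from that proof are being invoked (the $A_i$-partition bound, the attainment at $B_{(n)}$, the $S_k$ identity, the central-binomial rewriting via $\binom{2m-1}{m}=\tfrac12\binom{2m}{m}$, and the Wallis bounds on $\binom{2m}{m}$). Nothing is missing and nothing is new.
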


Using Theorem 1 we obtain the following.

\begin{theorem} Let $K$ and $L$ be infinite compact spaces and let $(\mu_n)$ be a JN-sequence from Theorem 1. Then every subsequence $(\mu_{k_n})$  of $(\mu_n)$ contains a strongly normal sebsequence that is an $\omega^*$-basic sequence in the dual of the Banach space $C(K\times L)$. \end{theorem}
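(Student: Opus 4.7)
The plan is to extract, from any given subsequence $(\mu_{k_n})$ of the JN-sequence in Theorem \ref{MAIN}, a further subsequence that is simultaneously strongly normal and $\omega^*$-basic in the dual Banach space $C(K\times L)^*$, via a Mazur-type inductive construction of a biorthogonal system of continuous functions with uniformly bounded sup-norms. Every subsequence of $(\mu_n)$ inherits all the conclusions of Theorem \ref{MAIN}, and in particular remains weak$^*$-null and normalized, so I may relabel and assume that we are working directly with $(\mu_n)$.

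The heart of the argument is an inductive construction of indices $n_1 < n_2 < \cdots$ and continuous functions $f_j \in C(K\times L)$ with $\|f_j\|_\infty \le M$ for some absolute constant $M$, satisfying $f_j(\mu_{n_i}) = \delta_{ij}$. At stage $j$ one uses that $\mathrm{supp}(\mu_{n_j})$ is a finite (hence closed) subset of $K\times L$ to produce, via Urysohn's lemma, a continuous function $g_j$ of sup-norm $1$ equal to $\mathrm{sign}(\mu_{n_j})$ on $\mathrm{supp}(\mu_{n_j})$, so that $g_j(\mu_{n_j}) = \|\mu_{n_j}\| = 1$. A Gram--Schmidt style correction $f_j := g_j - \sum_{i < j} g_j(\mu_{n_i}) f_i$ then makes $f_j$ biorthogonal to the previously chosen measures, and by choosing the index $n_j$ sparsely enough one can force the scalars $g_j(\mu_{n_i})$ to decay rapidly in $i$; this uses the weak$^*$-convergence $\mu_k(f_i) \to 0$ established in Theorem \ref{MAIN}, and keeps the resulting inflation of sup-norms summable.

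With the biorthogonal system $(f_j)$ in hand, both conclusions follow routinely. Strong normality of $(\mu_{n_j})$, interpreted as $\inf_j \mathrm{dist}(\mu_{n_j}, \overline{\mathrm{span}}\{\mu_{n_i} : i \neq j\}) > 0$, is immediate: any element $\mu_{n_j} - \sum_{i\neq j} c_i \mu_{n_i}$ pairs with $f_j$ to give $1$, so its norm in $C(K\times L)^*$ is at least $1/M$. The $\omega^*$-basic property then follows from the classical criterion that a bounded sequence in $X^*$ admitting a uniformly bounded sequence of biorthogonal functionals in the predual $X$ is a weak$^*$-basic sequence, with the partial-sum projections being both uniformly bounded and weak$^*$-continuous.

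I expect the main obstacle to be the uniform norm control $\|f_j\|_\infty \le M$ throughout the Gram--Schmidt corrections. Naively, the terms $g_j(\mu_{n_i}) f_i$ can accumulate and drive $\|f_j\|_\infty$ to infinity with $j$; the remedy is to choose the indices $n_j$ ultra-sparsely, exploiting the weak$^*$-null convergence of Theorem \ref{MAIN}, so that $|g_j(\mu_{n_i})| \le 2^{-(i+j)} M^{-1}$ for all $i < j$. This bounds $\|f_j\|_\infty$ by a geometric series whose sum lies below $M$, thereby closing the induction.
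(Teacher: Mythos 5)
There is a genuine gap, and it begins with the meaning of the term you are asked to prove. ``Strongly normal'' here is the technical notion of \cite{WS}: a normalized sequence $(y_n)$ in the dual of a Banach space $E$ is strongly normal when the set $\{x\in E:\ \sum_n|y_n(x)|<\infty\}$ is dense in $E$. Your reading of it as uniform minimality, $\inf_j \dist(\mu_{n_j},\overline{\mathrm{span}}\{\mu_{n_i}:i\neq j\})>0$, is a different property, so even if your construction worked it would not establish the statement. The paper's argument is short and quantitative: the linear span $\mathcal A$ of the functions $f\otimes g$ is dense in $C(K\times L)$ by Stone--Weierstrass; estimate (5) of Theorem \ref{MAIN} gives $|\mu_n(f\otimes g)|\le \frac{8}{\sqrt{\pi}}\frac{1}{\sqrt{n}}\|f\otimes g\|_{\infty}$; choosing a further subsequence $(s_{k_n})$ of the given $(k_n)$ with $\sum_n 1/\sqrt{s_{k_n}}<\infty$ then yields $\sum_n|\mu_{s_{k_n}}(h)|<\infty$ for every $h\in\mathcal A$, which is exactly strong normality, and the $\omega^*$-basic conclusion is quoted from \cite[Theorem 1]{WS}. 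Your proposal never uses the $1/\sqrt{n}$ decay, which is the entire reason this theorem is a corollary of Theorem \ref{MAIN}.

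Independently of the terminology, the analytic core of your construction is also insufficient. A uniformly bounded biorthogonal system $(f_j)\subset C(K\times L)$ with $\|f_j\|_{\infty}\le M$ gives uniform minimality but not the $\omega^*$-basic property: being $w^*$-basic requires the partial-sum operators $\mu\mapsto\sum_{i\le N}\mu(f_i)\,\mu_{n_i}$ to be uniformly bounded on the linear span, and this does not follow from boundedness of the $f_j$ alone (uniformly minimal sequences that are not basic exist). Closing this would require either choosing the $f_j$ with pairwise disjoint supports, or a genuine gliding-hump/perturbation argument in the style of Johnson--Rosenthal; your stated ``classical criterion'' is not a theorem. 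Moreover, your Gram--Schmidt step only achieves $\mu_{n_i}(f_j)=0$ exactly for $i<j$, while for $i>j$ you can only make $|\mu_{n_i}(f_j)|$ small by sparse selection, so the system is approximately rather than exactly biorthogonal, and that perturbation also has to be controlled before either conclusion can be drawn.
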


\begin{proof} For every two different points $(x_1, y_1)$ and $(x_2, y_2)$ of $K \times L$ we have $x_1\neq x_2$ or $y_1\neq y_2$. Thus there exists $f\in C(K)$ with $f(x_1)=1$ and $f(x_2)=0$ or $g\in C(L)$ with $g(y_1)=1$ and $g(y_2)=0$, so $f\otimes {\bf 1}_L$ or ${\bf 1}_K \otimes g$ separates points $(x_1, y_1)$ and $(x_2, y_2)$.

Thus the subalgebra \[{\mathcal A}:=\mbox{lin}\; \{f\otimes g: (f, g)\in C(K)\times C(L)\} \] of $C(K \times L)$ separates points of $K \times L$. Using the Stone-Weierstrass theorem we infer that ${\mathcal A}$ is dense in $C(K \times L)$.

Let $(s_{k_n})$ be a subsequence of $(k_n)$ such that $\sum_{n=1}^{\infty} \frac{1}{\sqrt{s_{k_n}}}<\infty.$ Then for all $(f, g)\in C(K)\times C(L)$ we have \[ \sum_{n=1}^{\infty} |\mu_{s_{k_n}}(f\otimes g)|\leq \sum_{n=1}^{\infty} \frac{8}{\sqrt{\pi}}\frac{1}{\sqrt{s_{k_n}}}\|f\otimes g\|_{\infty}<\infty.\] It follows that $\sum_{n=1}^{\infty} |\mu_{s_{k_n}}(h)|< \infty$ for every $h\in {\mathcal A}$. Thus the sequence $(\mu_{s_{k_n}})$ is strongly normal. By \cite[Theorem 1]{WS}, every strongly normal sequence $(y_n)$ in the dual of a Banach space $E$ contains a subsequence that is an $\omega^*$-basic sequence in the dual of $E$. Using this theorem we complete the proof. \end{proof}

For a Tychonoff space $X$ by $C_k(X)$ we denote the space $C(X)$ endowed with the compact-open topology. It is well known (see \cite{mccoy}) that $C_k(X)$ is a Fr\'echet lcs, i.e. a metrizable and complete lcs, if and only if $X$ is a hemicompact $k_{R}$-complete space. Theorem \ref{MAIN} applies easily  to get also the following  \cite[Corollary 6.7]{BKS}.
\begin{corollary} [Bargetz-K\c akol-Sobota]\label{BKS}
Let $X$  and $Y$ be locally compact and $\sigma$-compact spaces. Then the Fr\'echet lcs $C_k(X\times Y)$   contains either a complemented copy of $\mathbb{R}^{\mathbb{N}}$ or a complemented copy of $c_0$. Consequently, if both spaces $X$ and $Y$ are uncountable, $C_k(X\times Y)$ contains a complemented copy of $c_0$.
\end{corollary}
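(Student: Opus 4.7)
The plan is to split on whether the two factors $X$ and $Y$ carry infinite compact subsets. Since $X$ and $Y$ are locally compact and $\sigma$-compact, so is $X\times Y$; hence $X\times Y$ is hemicompact and a $k_{\mathbb{R}}$-space, so $C_k(X\times Y)$ is Fr\'echet by the criterion of \cite{mccoy} quoted before the statement. I may assume both $X$ and $Y$ are infinite, the finite case being vacuous.

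Case 1: one factor, say $X$, contains no infinite compact subspace. Then every compact subset of $X$ is finite, and local compactness forces $X$ to be discrete; $\sigma$-compactness then makes $X$ countable. Enumerate $X=\{x_{n}\}_{n\in\mathbb{N}}$, fix $y_{0}\in Y$, and put
\[P\colon C_k(X\times Y)\to C_k(X\times Y),\qquad (Pf)(x,y)=f(x,y_{0}).\]
Discreteness of $X$ makes $Pf$ continuous, and for any compact $C\subset X\times Y$ the first projection $\pi_X(C)$ is a finite set $F\subset X$, so $\sup_{C}|Pf|\le\sup_{F\times\{y_{0}\}}|f|$; thus $P$ is a continuous linear projection. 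Its range, consisting of functions constant in $y$, is linearly homeomorphic to $\mathbb{R}^{X}\cong\mathbb{R}^{\mathbb{N}}$ with the product topology, giving the claimed complemented copy of $\mathbb{R}^{\mathbb{N}}$.

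Case 2: both $X$ and $Y$ contain infinite compact subsets $K$ and $L$. I would apply Theorem \ref{MAIN} to produce a JN-sequence $(\mu_n)$ with $\supp(\mu_{n})\subset K\times L$ and $\|\mu_n\|=1$. Since each $\mu_n$ is controlled by the single seminorm $f\mapsto\sup_{K\times L}|f|$, the sequence is equicontinuous and weak$^{*}$-null on $C_k(X\times Y)$. Following the Cembranos--Freniche strategy, I would extend via Tietze the sign patterns $\varphi_n$ from $\supp(\mu_n)$ to continuous norm-one functions of tensor-product form $F_n=\tilde f_n\otimes\tilde g_n\in C(X\times Y)$ so that $\mu_n(F_n)$ is bounded away from $0$, and appeal to the quantitative estimate (5) of Theorem \ref{MAIN} in the form
\[|\mu_m(F_n)|\le\frac{8}{\sqrt\pi}\frac{1}{\sqrt m}\|F_n\|_\infty\qquad\text{for }m\ne n.\]
After passing to a sufficiently sparse subsequence $(\mu_{n_k})$ and performing a standard biorthogonalisation, $(F_{n_k})$ becomes equivalent to the unit basis of $c_0$ inside $C_k(X\times Y)$, and $T(f)=\sum_k \mu_{n_k}(f)F_{n_k}$ is a continuous linear projection onto its closed span; Fr\'echet-continuity of $T$ reduces, seminorm by seminorm, to a Banach-space computation in $C(K\times L)$ in which the bound (5) supplies the required absolute summability.

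The ``consequently'' clause is then immediate: an uncountable $\sigma$-compact space is a countable union of compact sets at least one of which is uncountable and hence infinite, so both $X$ and $Y$ automatically contain infinite compact subspaces and we land in Case 2. The main obstacle is precisely Case 2: choosing tensor-product extensions $F_n$ that simultaneously witness $\mu_n$ and decay uniformly against the remaining $\mu_m$, and then transferring the Banach-space biorthogonalisation into the Fr\'echet topology of $C_k(X\times Y)$. The two features that make this go through are the uniform control of $\supp(\mu_n)$ inside the single compact set $K\times L$ (giving equicontinuity) and the quantitative decay $1/\sqrt n$ from Theorem \ref{MAIN}(5).
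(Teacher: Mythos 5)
Your Case 1 is correct and in fact more explicit than the paper's treatment: the paper simply notes that $X$ must be discrete and homeomorphic to $\mathbb{N}$, so that $C_p(X\times Y)$ contains a complemented copy of $C_p(\mathbb{N})\cong\mathbb{R}^{\mathbb{N}}$, and then transfers this to $C_k$ by the closed graph theorem between Fr\'echet spaces; your projection $(Pf)(x,y)=f(x,y_0)$ achieves the same thing directly. (One side remark: the finite case is not ``vacuous'' --- for $X$ a singleton and $Y=\beta\mathbb{N}$ the conclusion actually fails --- but the paper glosses over this as well.) The ``consequently'' clause is handled correctly.

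Case 2, however, contains a fatal gap. You propose to choose norm-one functions $F_n=\tilde f_n\otimes\tilde g_n$ of tensor-product form with $\mu_n(F_n)$ bounded away from $0$, and then to control $\mu_m(F_n)$ for $m\neq n$ by estimate (5) of Theorem \ref{MAIN}. But estimate (5) applies to the diagonal terms too: it gives $|\mu_n(F_n)|\leq\frac{8}{\sqrt{\pi}}\frac{1}{\sqrt{n}}\|F_n\|_\infty\to 0$ for \emph{every} tensor product, so no such choice of $F_n$ exists. This is no accident: the sign pattern $\varphi_n(s)(j)$ of $\mu_n$, viewed as a $2^n\times n$ matrix, has rows exhausting all of $\{-1,1\}^{L_n}$ and is as far from rank one as possible, and the whole reason $(\mu_n)$ is a JN-sequence is that it asymptotically annihilates the algebra generated by tensor products while keeping norm one; any biorthogonal system drawn from that algebra is therefore unavailable. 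The paper's proof avoids tensor products entirely at this step: it invokes the Banakh--K\c akol--\'Sliwa equivalence (the JNP for $C_p(X\times Y)$ is equivalent to the existence of a complemented copy of $(c_0)_p$) and then the closed graph theorem between Fr\'echet spaces to pass from $C_p(X\times Y)$ to $C_k(X\times Y)$. The explicit version (Proposition \ref{LAST} and Corollary \ref{LAST1}) instead takes functions $\varphi_n$ with \emph{pairwise disjoint supports} covering $\supp(\mu_n)$, so that $\mu_m(\varphi_n)=0$ exactly for $m\neq n$ while $|\mu_n(\varphi_n)|\geq\frac{1}{2}$; these $\varphi_n$ are emphatically not of tensor-product form. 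To repair your argument you would need to replace the tensor-product extensions by such disjointly supported bumps (using that the finite sets $\supp(\mu_n)$ can be chosen pairwise disjoint and discrete) and then transfer the resulting projection to $C_k(X\times Y)$ via the closed graph theorem.
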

\begin{proof}
First assume that both spaces $X$ and $Y$ contain infinite compact subsets.  Then by Theorem \ref{MAIN} the space $C_p(X\times Y)$ has a JN-sequence. We apply Theorem \ref{BKS} to deduce that $C_p(X\times Y)$ contains a complemented copy of $(c_0)_{p}$. The classical closed graph theorem between Fr\'echet lcs, \cite[Theorem 4.1.10]{Bonet}, applies to get that the Fr\'echet lcs   $C_k(X\times Y)$ contains a complemented copy of the Banach space $c_0$. Now assume that, for example,  $X$ contains no infinite compact subset. Then $X$ is countable and locally compact, hence must be discrete and   homeomorphic to $\mathbb{N}$. Thus $C_p(X\times Y)$ contains a complemented copy of $C_p(\mathbb{N})$ (which is isomorphic to $\mathbb{R}^{\mathbb{N}}$). Applying again the same  closed graph theorem between Fr\'echet lcs, the Fr\'echet lcs $C_k(X\times Y)$ contains a complemented copy of the Fr\'echet lcs  $\mathbb{R}^{\mathbb{N}}$.
\end{proof}
Next example shows that the assumptions on $X$ and $Y$ in Theorem \ref{MAIN} cannot be omitted, compare with Corollary \ref{BKS}.
\begin{example}\label{example1}
The product $\mathbb{N}\times \beta\mathbb{N}$ does not have a JN-sequence.
\end{example}
\begin{proof}
With Theorem \ref{BKS} in mind, let us assume that  $C_p(\mathbb{N}\times\beta\mathbb{N}) \simeq C_p(\beta\mathbb{N})^{\mathbb{N}}$ contains a complemented copy of $(c_0)_p$. The classical  closed graph theorem between Fr\'echet lcs implies that $C(\beta{N})^{\mathbb{N}}\simeq \ell_{\infty}(\mathbb{N}\times\mathbb{N})\simeq \ell_{\infty}(\mathbb{N})$ contains a complemented copy of the Banach space $c_0$, which is not true.
\end{proof}
Note also that last Example \ref{example1} can be also deduced from \cite[Theorem 3.15]{Filipov}.
We have also the following \cite[Example 3.4]{KMSZ}, \cite[Theorem 1.5]{KMSZ}.
\begin{example}[K\c akol-Marciszewski-Sobota-Zdomskyy] \label{example2}
There exists a pseudocompact space $H$ such that all compact subsets of $H$ are finite and $H\times H$ is pseudocompact and yet has a JN-sequence.  It is consistent that there exists an infinite pseudocompact space $X$ such
$X\times X$  does  not admit a JN-sequence.
\end{example}
\section{Complemented JN-sequences} 
We provide some  application of our Theorem \ref{MAIN}.
Next useful Proposition \ref{LAST} describes another copies  of  $(c_0)_p$ complemented in $C_p(X)$ for special (but natural) JN-sequences.
\begin{proposition}\label{LAST}  Let $X$ be a Tychonoff space with a JN-sequence $(\mu_n)$ such that the supports of $\mu_n, n\in \mathbb{N},$ are pairwise disjoint and their sum is a discrete subset of $X$. Then there exists a sequence of functions $(\varphi_n) \subset C(X, [0,1])$ with pairwise disjoint supports such that $\mu_n (\varphi_m)=0$ for all $n,m \in \mathbb{N}, n\neq m$ and $\inf_n |\mu_n (\varphi_n)|>0.$ It follows that $E= \{\sum_{n=1}^{\infty} x_n \varphi_n: (x_n) \in c_0\}$ is a complemented subspace of $C_p(X)$ isomorphic to $(c_0)_p$.
 \end{proposition}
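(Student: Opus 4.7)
The plan is to build the $\varphi_n$ using Tychonoff's separation property together with the discreteness of $D := \bigcup_n \supp(\mu_n)$, and then to realize the isomorphism $E \cong (c_0)_p$ together with the projection $C_p(X) \to E$ directly in terms of the measures $\mu_n$ themselves.

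I would write $\mu_n = \sum_{x \in F_n}\alpha_{n,x}\delta_x$ with $F_n = \supp(\mu_n)$, split $F_n = F_n^+ \sqcup F_n^-$ by the sign of $\alpha_{n,x}$, and set $s_n^{\pm} := \sum_{x \in F_n^{\pm}}|\alpha_{n,x}|$, so $s_n^+ + s_n^- = 1$. Since $\mathbf{1}_X \in C(X)$ and $(\mu_n)$ is a JN-sequence, $s_n^+ - s_n^- = \mu_n(\mathbf{1}_X) \to 0$, so $\max(s_n^+, s_n^-) \ge 1/2$ for every $n$; let $F_n^*$ denote whichever piece realizes the maximum. By discreteness, for every $x \in D$ pick an open $V_x \ni x$ with $V_x \cap D = \{x\}$, and by Tychonoff's property take $h_x \in C(X, [0,1])$ with $h_x(x) = 1$ and $h_x \equiv 0$ off $V_x$. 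Along a fixed enumeration of the countable set $D$, recursively shrink each newly chosen $V_x$ (and thereby $h_x$) so that $\{h_x > 0\}$ becomes disjoint from the finitely many previously chosen $\{h_y > 0\}$; this uses Hausdorffness of $X$ together with discreteness of $D$ at each stage, and yields pairwise disjoint $\{h_x > 0\}_{x \in D}$.

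Set $\varphi_n := \max_{x \in F_n^*} h_x$. Then $\varphi_n \in C(X, [0,1])$, equals $1$ on $F_n^*$ and $0$ on $D \setminus F_n^*$, and inherits pairwise disjoint supports from the $h_x$'s; for $m \neq n$,
\[
\mu_n(\varphi_m) = \sum_{x \in F_n} \alpha_{n,x} \varphi_m(x) = 0,
\]
because $F_n \subset D \setminus F_m$, while $|\mu_n(\varphi_n)| = s_n^* \ge 1/2$, giving $\inf_n |\mu_n(\varphi_n)| \ge 1/2 > 0$.

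For the final claim, define $T : (c_0)_p \to C_p(X)$ by $T(a) := \sum_n a_n \varphi_n$; pairwise disjointness reduces the sum to a single nonzero term at every point, and continuity of $T(a)$ at $y_0 \in X$ follows either from $y_0$ belonging to the open set $\{\varphi_{n_0} > 0\}$ (on a neighborhood of which $T(a) = a_{n_0} \varphi_{n_0}$), or, when $\varphi_n(y_0) = 0$ for all $n$, by picking $N$ with $|a_n| < \varepsilon$ for $n \ge N$ and using continuity of $\varphi_1, \dots, \varphi_{N-1}$ at $y_0$ to control those finitely many contributions on a small neighborhood. Thus $T$ is a continuous linear bijection onto $E$ with continuous inverse $T^{-1}(f) = (\mu_n(f)/\mu_n(\varphi_n))_n$ (each coordinate a continuous functional on $C_p(X)$), so $E \cong (c_0)_p$. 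The projection is
\[
P(f) := T\Bigl(\bigl(\mu_n(f)/\mu_n(\varphi_n)\bigr)_n\Bigr);
\]
its coefficient sequence lies in $c_0$ by the JN-property together with $\inf_n |\mu_n(\varphi_n)| > 0$, continuity on $C_p(X)$ holds because $P(f)(y)$ is pointwise a fixed continuous linear functional of $f$, and $P|_E = \mathrm{id}_E$ follows from $\mu_m(\sum a_n \varphi_n) = a_m \mu_m(\varphi_m)$.

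The main obstacle is the topological step of arranging pairwise disjoint $\{h_x > 0\}$'s in a general Tychonoff space: discreteness of $D$ and Hausdorffness of $X$ alone do not provide this for free, and the recursive enumeration-and-shrinking argument must be carried out carefully, exploiting that only finitely many constraints are active at each stage of the induction.
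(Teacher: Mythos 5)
Your overall strategy coincides with the paper's: split each $\supp(\mu_n)$ by sign to get a piece of mass at least $\tfrac12$ (note that $\max(s_n^+,s_n^-)\ge\tfrac12$ already follows from $s_n^++s_n^-=\|\mu_n\|=1$, so the appeal to $\mu_n(\mathbf{1}_X)\to 0$ is superfluous), build bump functions with pairwise disjoint cozero sets at the points of the discrete set $D$, take finite maxima over the chosen halves of the supports, and realize the complementation via $T(a)=\sum_n a_n\varphi_n$ and the coefficient functionals $f\mapsto \mu_n(f)/\mu_n(\varphi_n)$ with $P=TS$. That functional-analytic part is correct and is essentially identical to the paper's argument (the paper writes $P=TS$ with $Sf=(\mu_n(f))_n$ and $t_n=1/\mu_n(\varphi_n)$).

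The gap is precisely in the step you yourself flag as the main obstacle, and the justification you offer for it does not work. To shrink $V_{x_k}$ so that $\{h_{x_k}>0\}$ misses the previously chosen $\{h_{y_j}>0\}$, you need an open neighbourhood of $x_k$ disjoint from each $\{h_{y_j}>0\}$, i.e.\ you need $x_k\notin\overline{\{h_{y_j}>0\}}$. Knowing only $h_{y_j}(x_k)=0$ (equivalently $x_k\notin V_{y_j}$) does not give this: a point can lie in the closure of a cozero set on which the function vanishes, as with $h=\max(0,t)$ on $\mathbb{R}$ and the point $0$. Hausdorffness separates points from points, not a point from an open set it does not belong to, so ``Hausdorffness plus discreteness of $D$'' is insufficient and the recursion as described can get stuck. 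The repair is exactly what the paper's proof supplies: use complete regularity to interpolate closures, choosing open $W_k$ with $x_k\in W_k\subset\overline{W_k}\subset U_k$ (e.g.\ $W_k=\{g>\tfrac12\}$ for a Urysohn function $g$ vanishing off $U_k$), setting $V_k:=W_k\setminus\bigcup_{j<k}\overline{W_j}$ --- these are pairwise disjoint open sets still containing the respective $x_k$, because $x_k\notin U_j\supset\overline{W_j}$ for $j\ne k$ --- and then supporting $h_{x_k}$ inside a further open $B_k$ with $\overline{B_k}\subset V_k$. With that modification your construction, and the remainder of your argument, goes through.
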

\begin{proof} (1) Let $A_{2n}\subset \mbox{supp}\, \mu_n$ with $|\mu_n (A_{2n})|\geq \frac{1}{2}$ and $A_{2n-1}= (\mbox{supp}\, \mu_n \setminus A_{2n})$ for $n\in \mathbb{N}.$ Put $$A:=\bigcup_{n=1}^{\infty} A_n=\{x_n: n\in \mathbb{N}\}.$$ Denote the topology of $X$ by $\tau$. The subset $A$ of $X$ is discrete, so there exists a sequence $(U_n) \subset \tau$ such that $U_n\cap A=\{x_n\}, n\in \mathbb{N}.$

Let $(W_n) \subset \tau$ such that $$x_n \in W_n \subset \overline{W_n} \subset U_n, n\in \mathbb{N}.$$

The open sets $$V_n:=W_n \setminus \bigcup_{1\leq  k<n} \overline{W_k}, n\in \mathbb{N},$$ are pairwise disjoint and $x_n \in V_n, n\in \mathbb{N}.$ Let $(B_n) \subset \tau$ such that $x_n \in B_n \subset \overline{B_n} \subset V_n, n\in \mathbb{N}.$  Put $I_n:= \{k\in \mathbb{N}: x_k\in A_n\}, C_n:=\bigcup_{k\in I_n} B_k$ and $D_n:=\bigcup_{k\in I_n} V_k$ for $n\in \mathbb{N}.$ Then $(C_n), (D_n) \subset \tau$ and $A_n \subset C_n \subset \overline{C_n} \subset D_n$ for $n\in \mathbb{N}.$ Clearly, the sets $D_n, n\in \mathbb{N},$ are pairwise disjoint.

For every $n\in \mathbb{N}$ there exists a continuous function $f_n: X \to [0,1]$ such that $f_n(x_n)=1$ and $f_n|B_n^c=0$. The functions $g_n:=\max_{k\in I_n} f_k, n\in \mathbb{N},$ are continuous, $g_n(X)\subset [0,1]$ and $\mbox{supp}\, g_n\subset D_n$ for $n\in \mathbb{N}.$ Put $\varphi_n=g_{2n}$ for  $n\in \mathbb{N}.$ Since $$\mbox{supp}\, \mu_n=A_{2n}\cup A_{2n-1}\subset D_{2n} \cup D_{2n-1}$$ and $$\mbox{supp}\, \varphi_n \subset D_{2n}$$ for $n\in \mathbb{N},$ we get $\mu_n (\varphi_m)=0$ for all $n,m \in \N, n\neq m.$ $\\$ Moreover $\inf_n |\mu_n (\varphi_n)|=\inf_n |\mu_n (A_{2n})|\geq \frac{1}{2}.$

\bigskip

(2) Put $t_n:=1/\mu_n(\varphi_n), n\in \mathbb{N}$. The operator \[ T: (c_0)_p \to C_p(X), x=(x_n) \to Tx=\sum_{n=1}^{\infty} t_n x_n \varphi_n\] is well defined, linear, injective and continuous, since the functions $\varphi_n, n\in \mathbb{N},$ have pairwise disjoint supports, $(|t_n|) \subset [1, 2]$ and $\varphi_n (X) \subset [0,1], n\in \mathbb{N}.$ The linear operator \[S: C_p(X) \to (c_0)_p, f \to Sf=(\mu_n (f))\] is well defined and continuous. For $x=(x_k) \in c_0$ and $n\in \mathbb{N}$ we have \[\mu_n (Tx)=\sum_{k=1}^{\infty} t_kx_k \mu_n (\varphi_k)=t_nx_n\mu_n(\varphi_n)=x_n.\] Thus $STx=x$ for every $x\in c_0.$ Hence the operator \[P: C_p(X) \to C_p(X), P=TS,\] is a linear continuous projection. Thus the subspace $Z:= \ker P= \ker S$ is complemented in $C_p(X)$. The operator $S$ is open, since for every neighbourhood $U$ of $0$ in $C_p(X)$ the set $V:=T^{-1}(U)$ is a neighbourhood of $0$ in $(c_0)_p$, and $V=ST(V)\subset S(U).$ Thus the quotient space $C_p(X)/Z$ is isomorphic to $(c_0)_p$, and $T(c_0)=P(C_p(X))$ is a complemented subspace of $C_p(X)$, that is isomorphic to $(c_0)_p$. $\\$ Clearly $T(c_0)=\{\sum_{n=1}^{\infty} x_n \varphi_n: (x_n) \in c_0\}.$
\end{proof}
A JN-sequence $(\mu_n)$ on a Tychonoff space $X$ is said to be {\it complemented} if there exists a sequence $(\varphi_n) \subset C(X)$ of non-negative functions with pairwise disjoint supports and $\sup_{x\in X}\varphi_n(x)=1, n\in \N,$ such that $\mu_n (\varphi_m)=0$ for all $n,m \in \mathbb{N}, n\neq m$ and $\inf_n |\mu_n (\varphi_n)|>0.$
\begin{corollary}\label{LAST1}
Let $X$ and $Y$ be Tychonoff spaces that contain infinite compact subspaces. 
Then the product space $X\times Y$ has a complemented JN-sequence $(\mu_n).$ In particular, $C_p(X\times Y)$ contains a complemented subspace as provided in Proposition \ref{LAST} which is  isomorphic to $(c_0)_p$.
 \end{corollary}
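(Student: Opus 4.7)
The plan is to reduce the statement to a direct application of Proposition \ref{LAST}, for which I must produce a JN-sequence $(\mu_n)$ on $X\times Y$ whose supports are pairwise disjoint and whose union forms a discrete subspace of $X\times Y$. Theorem \ref{MAIN} provides exactly the needed flexibility: for any choice of finite sets $K_n\subset K$ and $L_n\subset L$ with $|K_n|=2^n$ and $|L_n|=n$, it yields a normalized sign-measure $\mu_n$ supported precisely on $K_n\times L_n$, and the resulting sequence is a JN-sequence on $X\times Y$. Hence the entire corollary reduces to choosing these finite sets so that the rectangles $K_n\times L_n$ are pairwise disjoint and have discrete union in $X\times Y$.

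To arrange this, I use the standard fact that every infinite compact Hausdorff space contains a countably infinite discrete subspace, which follows from an inductive construction based on an accumulation point $p$ together with Hausdorff separation between $p$ and successively chosen distinct points. Fix such countable discrete subsets $D_K=\{a_i:i\in\mathbb{N}\}\subset K$ and $D_L=\{b_j:j\in\mathbb{N}\}\subset L$. Since $K$ and $L$ are subspaces of $X$ and $Y$, the open sets witnessing discreteness in $K,L$ extend to open sets of $X,Y$, so $D_K$ is discrete in $X$ and $D_L$ in $Y$; taking products of those witnessing open sets shows that $D_K\times D_L$ is a discrete subspace of $X\times Y$. Now partition $D_K=\bigsqcup_{n}K_n$ with $|K_n|=2^n$ and $D_L=\bigsqcup_{n}L_n$ with $|L_n|=n$, which is possible by countability of both. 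Disjointness of the $K_n$'s and of the $L_n$'s forces the rectangles $K_n\times L_n$ to be pairwise disjoint, and their union lies in the discrete set $D_K\times D_L$, hence is itself discrete in $X\times Y$.

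Applying Theorem \ref{MAIN} with these choices produces the JN-sequence $(\mu_n)$ whose supports $K_n\times L_n$ satisfy the two structural hypotheses of Proposition \ref{LAST}. That proposition then supplies functions $(\varphi_n)\subset C(X\times Y,[0,1])$ with pairwise disjoint supports, $\mu_n(\varphi_m)=0$ for $n\neq m$, and $\inf_n|\mu_n(\varphi_n)|\ge 1/2$; because each $\varphi_n$ is built as a maximum of Urysohn bump functions attaining the value $1$, the normalization $\sup_{z}\varphi_n(z)=1$ is automatic, so $(\mu_n)$ is a complemented JN-sequence in the sense defined just before the corollary. The ``in particular'' clause is the remaining conclusion of Proposition \ref{LAST}. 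The only genuinely delicate ingredient is the existence of the countable discrete subspaces $D_K,D_L$; once these are in place, the argument is essentially bookkeeping that aligns the data of Theorem \ref{MAIN} with the hypotheses of Proposition \ref{LAST}.
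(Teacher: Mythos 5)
Your proof is correct and follows essentially the same route as the paper: choose countable discrete subsets of $K$ and $L$, partition them into pieces of sizes $2^n$ and $n$, apply Theorem \ref{MAIN} to get a JN-sequence with pairwise disjoint supports whose union is discrete, and finish with Proposition \ref{LAST}. The extra details you supply (existence of the countable discrete subspaces, discreteness of the product, and the normalization $\sup_z\varphi_n(z)=1$) are all accurate and merely make explicit what the paper leaves implicit.
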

\begin{proof} Let $K$ and $L$ be infinite compact subspaces of $X$ and $Y$, respectively. Let $K_0$ and $L_0$ be infinite countable discrete subsets of $K$ and $L$, respectively. Let $(K_n)$ and $(L_n)$ be partitions of $K_0$ and $L_0$, respectively, such that $|K_n|=2^n$ and $|L_n|=n$ for $n\in \mathbb{N}$. By Theorem \ref{MAIN} there exists a JN-sequence $(\mu_n)$ such that the supports of $\mu_n, n\in\mathbb{N},$ are pairwise disjoint and their sum is a discrete subset of $X\times Y$. Then Proposition \ref{LAST} completes the proof. \end{proof}
Corollary \ref{LAST1} combined with the closed graph theorem  yields  Corollary \ref{LAST2}  which   extends Cembranos-Freniche result mentioned above.
\begin{corollary}\label{LAST2}
Let $X$ and $Y$ be infinite compact spaces. Then there exists a normalized sequence $(\varphi_n)$ in the Banach space  $C(X\times Y)$ of non-negative functions with pairwise disjoint supports such that the subspace $E:=\{\sum_{n=1}^{\infty} x_n \varphi_n: (x_n) \in c_0\}$ of $C(X\times Y)$ is an isometric copy of $c_0$ and complemented in $C(X\times Y)$.
\end{corollary}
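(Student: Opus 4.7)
The plan is to convert the $C_p$--level complementation from Proposition \ref{LAST} into a norm-level statement for the Banach space $C(X\times Y)$. First I would apply Corollary \ref{LAST1} to the infinite compact spaces $X$ and $Y$: this produces a JN-sequence $(\mu_n)$ of finitely supported signed measures of norm one, together with a sequence $(\varphi_n)\subset C(X\times Y,[0,1])$ of non-negative continuous functions with pairwise disjoint supports, satisfying $\sup_{X\times Y}\varphi_n=1$, $\mu_n(\varphi_m)=0$ for $n\ne m$, and $c:=\inf_n|\mu_n(\varphi_n)|>0$. These $(\varphi_n)$ will be the ones claimed in the statement; normalization is automatic since $\|\varphi_n\|_\infty=1$.

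Next I would define the candidate isometry $T\colon c_0 \to C(X\times Y)$ by $T(x)=\sum_{n=1}^{\infty} x_n\varphi_n$ for $x=(x_n)\in c_0$. Because the supports of the $\varphi_n$ are pairwise disjoint, at any point $p\in X\times Y$ at most one summand is nonzero, so the series converges uniformly and the partial sums form a Cauchy sequence in $C(X\times Y)$; moreover $\|T(x)\|_\infty=\sup_n|x_n|\cdot\sup_{X\times Y}\varphi_n=\|x\|_\infty$, so $T$ is an isometric linear embedding whose image is exactly $E$.

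To produce a bounded projection onto $E$, I would set $t_n:=1/\mu_n(\varphi_n)$ (so $|t_n|\le c^{-1}$) and define $S\colon C(X\times Y)\to c_0$ by $S(f)=(t_n\mu_n(f))_{n\in\N}$. The JN-property ensures $\mu_n(f)\to 0$ for every $f\in C(X\times Y)$, hence $S(f)\in c_0$; and $|S(f)_n|\le c^{-1}\|\mu_n\|\,\|f\|_\infty=c^{-1}\|f\|_\infty$ gives continuity. Since $\mu_n$ has finite support, the interchange $\mu_n(Tx)=\sum_k x_k\mu_n(\varphi_k)=x_n\mu_n(\varphi_n)$ is valid, whence $S\circ T=\mathrm{id}_{c_0}$. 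Therefore $P:=T\circ S$ is a bounded linear projection of $C(X\times Y)$ onto $E$, which establishes the claim.

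The main obstacle is essentially packaged into Corollary \ref{LAST1}: one needs the support-disjointness and the uniform lower bound $\inf_n|\mu_n(\varphi_n)|>0$, which together allow $T$ to be isometric and $S$ to be simultaneously bounded into $c_0$ and inverse to $T$. Once Corollary \ref{LAST1} is in hand, the remaining step is the routine Banach-space verification above; the only subtle points are checking uniform (not just pointwise) convergence of $T(x)$ and using the JN-property to guarantee that $S$ lands in $c_0$ rather than merely $\ell_\infty$.
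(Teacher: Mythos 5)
Your proposal is correct, and it takes a more explicit route than the paper. The paper disposes of this corollary in one line: it invokes Corollary \ref{LAST1} to get the complemented copy of $(c_0)_p$ in $C_p(X\times Y)$ and then cites the closed graph theorem (between Banach/Fr\'echet spaces) to conclude that the same algebraic projection is norm-continuous on $C(X\times Y)$. You instead re-run the construction of Proposition \ref{LAST} directly at the norm level: you verify by hand that $T(x)=\sum_n x_n\varphi_n$ converges uniformly and is isometric (using the disjoint supports and $\|\varphi_n\|_\infty=1$), that $S(f)=(t_n\mu_n(f))_n$ lands in $c_0$ (via the JN-property) and is bounded (via $\|\mu_n\|=1$ and $\inf_n|\mu_n(\varphi_n)|>0$), and that $S\circ T=\mathrm{id}_{c_0}$, so $P=T\circ S$ is a bounded projection onto $E$. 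Your approach buys two things: it avoids the closed graph theorem entirely, giving explicit norm bounds ($\|S\|\le c^{-1}$, $\|P\|\le c^{-1}$ with $c=\inf_n|\mu_n(\varphi_n)|\ge\tfrac12$), and it actually proves the isometry claim in the statement, which the paper's one-line argument leaves implicit (the closed graph theorem only yields continuity of the projection; the isometric identification $E\cong c_0$ still requires your observation about disjoint supports and normalization). The only point worth making explicit is that the interchange $\mu_n(Tx)=\sum_k x_k\mu_n(\varphi_k)$ follows either from the finite support of $\mu_n$ or from the norm convergence of the series together with the continuity of $\mu_n$ as a functional; you note this, so the argument is complete.
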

The above Corollary \ref{LAST1}  may suggest a question whether every Tychonoff space $X$ with a JN-sequence admits also  a complemented JN-sequence. It turns out that the following general fact (kindly suggested to the authors by Sobota \cite{Sobota}) also holds.
\begin{proposition}\label{pro}
For every Tychonoff space $X$ with a JN-sequence there exists in $X$  a complemented JN-sequence.
\end{proposition}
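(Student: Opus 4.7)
The plan is to reduce Proposition~\ref{pro} to Proposition~\ref{LAST} by producing, from the given JN-sequence on $X$, a JN-sequence $(\nu_k)$ with pairwise disjoint supports whose union is a discrete subset of $X$; Proposition~\ref{LAST} then supplies the functions $(\varphi_k)$ exhibiting $(\nu_k)$ as a complemented JN-sequence.

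Let $(\mu_n)$ be the given JN-sequence on $X$. First I would decompose $\mu_n=\mu_n^+-\mu_n^-$ into its canonical positive and negative parts and use $\|\mu_n^+\|+\|\mu_n^-\|=1$ to pass to a subsequence with $\|\mu_n^+\|\geq 1/2$ for every $n$; write $A_n=\supp(\mu_n^+)$ and $B_n=\supp(\mu_n^-)$. I would then build inductively indices $n_1<n_2<\cdots$, subsets $A'_k\subseteq A_{n_k}$ with $|\mu_{n_k}(A'_k)|\geq 1/4$, and pairwise disjoint open sets $U_k\supseteq A'_k$ such that $U_k$ is disjoint from $B_{n_k}\cup\bigcup_{j\neq k}\supp(\mu_{n_j})$ and the countable union $\bigcup_k A'_k$ is a discrete subspace of $X$. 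The restricted-and-renormalized sign-measures $\nu_k:=\mu_{n_k}|_{A'_k}/\|\mu_{n_k}|_{A'_k}\|$ would then have norm one and pairwise disjoint supports $A'_k$ whose union is discrete; with a sufficiently careful choice of $A'_k$ they would also inherit the weak$^*$-null property, and Proposition~\ref{LAST} applied to $(\nu_k)$ would yield a complemented JN-sequence on $X$.

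The main obstacle is the inductive step producing $A'_k$ and $U_k$: a point of $A_{n_k}$ may a priori cluster on $\overline{\bigcup_{j\neq k}\supp(\mu_{n_j})}$, so the Tychonoff separation property by itself does not supply the separating open set $U_k$. To handle this I expect the argument to combine a diagonal enumeration of the countably many potential cluster points (all of which lie in the countable set $\bigcup_m\supp(\mu_m)$ or in its closure) with the weak$^*$ convergence $\mu_n\to 0$: persistent positive mass accumulating at such a point $x$ would, by testing $\mu_n$ against continuous functions peaking at $x$ (which exist by the Tychonoff property of $X$), force compensating negative mass of $\mu_n$ near $x$, so the offending points of $A_{n_k}$ can be absorbed into $B_{n_k}$ while preserving the $1/4$ lower bound on $|\mu_{n_k}(A'_k)|$. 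This delicate interplay between Tychonoff separation, weak$^*$ convergence, and finiteness of the supports is what I expect to be the technical core of Sobota's proof.
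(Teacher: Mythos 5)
There is a genuine gap, and it sits exactly where you yourself locate the ``technical core''. The paper does not prove the disjointification step at all: it invokes \cite[Theorem 1.2]{MSZ}, which asserts that a Tychonoff space admitting a JN-sequence admits one with pairwise disjoint supports enclosed in pairwise disjoint open sets $U_n$. Given that, the paper's proof is one line and bypasses Proposition \ref{LAST} entirely: put $\varphi_n(x)=\mathrm{sgn}(\mu_n(\{x\}))$ on $\supp(\mu_n)$ and $\varphi_n=0$ off $U_n$ (extended continuously inside $U_n$), so that $\mu_n(\varphi_n)=\|\mu_n\|=1$ while $\mu_n(\varphi_m)=0$ for $m\neq n$. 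Your plan instead tries to rebuild the disjointification from scratch, and it leaves its two hardest points unproved. First, the existence of the pairwise disjoint open sets $U_k$ separating $A'_k$ from the closures of the other supports is only ``expected'' to follow from a diagonal enumeration; nothing is actually constructed, and in a general Tychonoff space points of $A_{n_k}$ really can cluster on $\overline{\bigcup_{j\neq k}\supp(\mu_{n_j})}$.

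Second, and more seriously, the restricted-and-renormalized measures $\nu_k=\mu_{n_k}|_{A'_k}/\|\mu_{n_k}|_{A'_k}\|$ need not be weak$^*$-null: knowing $\mu_{n_k}(f)\to 0$ gives no control over $\mu_{n_k}(f\cdot{\bf 1}_{A'_k})$. Indeed, for a JN-sequence one always has $\|\mu_n^+\|,\|\mu_n^-\|\to \frac12$, yet the normalized positive parts $\mu_n^+/\|\mu_n^+\|$ can converge weak$^*$ to a Dirac measure, so a lower bound $|\mu_{n_k}(A'_k)|\geq \frac14$ does nothing to guarantee that $(\nu_k)$ is again a JN-sequence. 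Arranging restrictions that simultaneously keep disjoint (indeed discretely placed) supports \emph{and} remain weak$^*$-null is precisely the content of the cited theorem of Marciszewski--Sobota--Zdomskyy; your proposal in effect assumes what it needs to prove. If you do not want to cite that result, you must supply its proof; the rest of your reduction (feeding the resulting sequence into Proposition \ref{LAST}) is sound but is also more than is needed, since the definition of a complemented JN-sequence only asks for the functions $\varphi_n$, which the sign construction above produces directly.
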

\begin{proof} By \cite[Theorem 1.2]{MSZ} there exists
 a JN-sequence $(\mu_n)$ with disjoint supports for which we can find  disjoint open sets $(U_n)$  such that   $\supp(\mu_n)\subset  U_n$ for every $n\in\mathbb{N}$. Define $\varphi_n(x) = sgn(\mu_n(\{x\}))$ for $x\in\supp(\mu_n)$
and $\varphi_n = 0$ outside $U_n.$ Then $0\leq \varphi_n\leq 1$ and   $\mu_n(\varphi_n)=1$ but
$\mu_n(\varphi_m)=0$ for every $m\neq n$,  so  $(\mu_n)$ is a complemented
JN-sequence.
\end{proof}



\end{document}